\title{INTRO}
\author{}
\newcommand{\ga}{\gamma}
\newcommand{\om}{\omega}
\newcommand{\Om}{\Omega}
\newcommand{\pa}{\partial}
\theoremstyle{plain}
\newtheorem{thm}{Theorem}
\newtheorem{prop}[thm]{Proposition}
\newtheorem*{qu}{Question}
\theoremstyle{definition}
\newtheorem{df}{Definition}
\newtheorem*{rem}{Remark}
\begin{document}

\title{Closed symmetric 3-differentials on complex surfaces}
\author{Federico Buonerba}
\address{Courant Institute of Mathematical Sciences, New York University, New York City, NY 10012-1185}
\email{buonerba@cims.nyu.edu}
\author{Dmitry Zakharov}
\address{Courant Institute of Mathematical Sciences, New York University, New York City, NY 10012-1185}
\email{dvzakharov@gmail.com}

\begin{abstract}

We give a necessary and sufficient condition for a non-degenerate symmetric $3$-differential with nonzero Blaschke curvature on a complex surface to be locally representable as a product of three closed holomorphic $1$-forms. We give two versions of this condition corresponding to different choices of coordinates, one of which defines a coordinate-free differential operator, answering a question of Bogomolov and de Oliveira.

\end{abstract}

\maketitle

In \cite{1}, \cite{2}, \cite{3} Bogomolov and de Oliveira initiated a systematic study of closed symmetric differentials---that is, symmetric differentials that can be locally decomposed as a product of closed 1-forms---on complex projective manifolds. This notion is a natural higher rank extension of the notion of a closed differential 1-form.

The existence of nontrivial symmetric differentials on a projective variety is related to its topology: a variety $X$ has a nontrivial $1$-form if and only $\pi_1(X)$ has infinite abelianization, and it was recently shown in \cite{5} that $X$ has a nontrivial symmetric differential of some degree if $\pi_1(X)$ has a finite-dimensional representation with infinite image. The existence of closed symmetric differentials imposes much stronger topological restrictions on the underlying manifold. Any closed symmetric differential of rank 2 on a projective variety $X$ is obtained as pullback under a morphism to a finite quotient of an abelian variety, and in particular, $\pi_1(X)$ contains a finite-index subgroup with infinite abelianization (Thm. 3.2 in \cite{1}). Furthermore, a global decomposition of a closed symmetric 2-differential into a product of closed meromorphic 1-forms is induced by a fibration onto a curve of genus at least two, implying in particular that the fundamental group is large (Thm. 3.3 \cite{1}).


Due to the non-trivial topological restrictions implied by the existence of closed symmetric differentials, a natural problem is to characterize them among all symmetric differentials. This problem is most natural for a surface, since in dimension three or higher a symmetric differential does not in general decompose as a product of 1-forms even pointwise. As explained at the end of section 2.2 of \cite{1}, for $X$ a complex surface, dimension estimates on the jet bundle of sections of $S^n(\Omega^1_X)$ imply that locally every non-degenerate closed symmetric differential satisfies a non-linear differential equation of order depending only on the rank $n$. However, these local differential equations are fully understood only for symmetric differentials of rank one or two, in which case they admit an elegant coordinate-free description. A closed symmetric $1$-differential is the same thing as a closed $1$-form, so the characterizing differential operator is the exterior derivative. A symmetric $2$-differential on a complex surface can be viewed as a complexified Riemannian metric, and the condition that it is closed is equivalent to the existence of Euclidean coordinates, in other words the metric must have vanishing curvature:
\begin{thm}[\cite{1} 2.1, \cite{2} 2.1] \label{thm:BO} Let $X$ be a complex projective surface, and $\omega\in H^0(X,S^n(\Omega^1_X))$ be a symmetric $m$-differential having maximal rank at the generic point. Then
\begin{enumerate}

\item If $n=1$ then $\omega$ is closed if and only if $d\omega=0$.
\item If $n=2$ then $\omega$ is closed if and only if $D_2\omega=0$ where $D_2:H^0(X,S^2(\Omega^1_X))\to H^0(X,K_X^4)$ is the non-linear differential operator given by $D_2(\omega)=det(\omega)^2R(\omega)$, where $R$ is the natural complexification of the Riemannian curvature of $\omega$ viewed as a metric.
\end{enumerate}
\end{thm}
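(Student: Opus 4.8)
The plan is to reduce every statement to the Zariski-open set $U\subseteq X$ on which $\omega$ attains its maximal rank. Both sides of each claimed equivalence are governed by their behavior on $U$: the target $K_X^n$ is locally free, so a holomorphic section that vanishes on the dense open $U$ vanishes on all of $X$; and closedness is, by definition, a local condition imposed where the decomposition into $1$-forms is non-degenerate. With this reduction in hand, the case $n=1$ is essentially a tautology. A symmetric $1$-differential is simply a holomorphic $1$-form $\omega$, and ``$\omega$ is closed'' means precisely that $\omega$ is locally a product of one closed $1$-form, i.e.\ that $\omega$ itself is closed; by the holomorphic Poincar\'e lemma this is equivalent to $\omega=df$ locally, hence to $d\omega=0$. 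So the characterizing operator $D_1$ is just the exterior derivative, and nothing further is needed.

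For $n=2$ I would interpret $\omega=\sum\omega_{ij}\,dz_i\,dz_j$ as a holomorphic (complexified) metric on $U$, equipped with the holomorphic torsion-free Levi--Civita connection $\nabla$ given by the usual Christoffel formula $\Gamma^k_{ij}=\tfrac12 g^{kl}(\partial_i g_{jl}+\partial_j g_{il}-\partial_l g_{ij})$, whose Gauss curvature $R$ is a scalar function. For the forward direction, if $\omega=\eta_1\eta_2$ with $\eta_i$ closed, then locally $\eta_1=df$ and $\eta_2=dg$, so in the coordinates $(f,g)$ — valid exactly where $df\wedge dg\neq 0$, that is on $U$ — the form $\omega=df\,dg$ has constant coefficients and hence $R\equiv 0$ there; thus $D_2\omega=\det(\omega)^2 R$ vanishes on $U$ and therefore on $X$. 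For the reverse direction, if $D_2\omega=0$ then $R\equiv 0$ on $U$, so $\nabla$ is flat. The holomorphic Frobenius theorem then makes the flat-section equation $\nabla\xi=0$ integrable, producing flat holomorphic coordinates $(u,v)$ in which $\omega$ has constant coefficients; any non-degenerate constant symmetric form over $\CC$ factors as a product of two independent constant linear forms (e.g.\ $du\,dv$ after a linear change), which are exact and hence closed, yielding the desired decomposition $\omega=\eta_1\eta_2$.

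I expect the main obstacle to lie in two places. The first is the bookkeeping that makes $D_2$ well-defined as a map into $H^0(X,K_X^4)$: the Brioschi formula exhibits the curvature as $R=N/\det(\omega)^2$, with $N$ a universal polynomial in the $\omega_{ij}$ and their first two derivatives, so that $\det(\omega)^2 R=N$ is holomorphic across the locus $\{\det\omega=0\}$; since $\det(\omega)\in H^0(X,K_X^2)$ and $R$ is a weight-zero scalar, this places $D_2\omega$ in $H^0(X,K_X^4)$, matching the stated target. The second and genuinely analytic step is the passage from vanishing curvature to flat holomorphic coordinates, which is the holomorphic integrability argument underlying the whole equivalence; this is precisely where the non-degeneracy hypothesis $\det(\omega)\neq 0$ is indispensable, since it is what allows $\omega$ to be treated as a metric with a well-defined connection in the first place.
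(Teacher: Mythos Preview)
The paper does not supply its own proof of this theorem: it is quoted from \cite{1} and \cite{2} as background and motivation, so there is no argument here to compare against. Your outline is correct and is essentially the classical argument underlying those references. The $n=1$ case is indeed tautological, and for $n=2$ your two directions are the right ones: a product of exact $1$-forms $df\cdot dg$ furnishes flat coordinates and hence zero curvature, while conversely zero curvature yields, via holomorphic Frobenius applied to the flat-frame equation, flat coordinates in which the constant non-degenerate quadratic form factors over $\CC$ into two exact linear forms. Your remark on the Brioschi formula is also correct and is exactly what is needed to see that $\det(\omega)^2 R(\omega)$ extends holomorphically across the degeneracy divisor and defines a global section of $K_X^4$.
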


In the higher rank case, although the local existence of differential operators detecting closed symmetric differentials is clear, it is not clear at all how to explicitly represent them in local coordinates, how to understand them in a global, coordinate-free fashion, and what geometric meaning they may carry. The natural question is therefore, in analogy with the rank 1 and 2 cases, whether global operators carrying interesting geometric information exist in general, and whether there are enough of them to characterize closed symmetric differentials:
\begin{qu}[\cite{1} p.10] \textit{Let $X$ be a complex projective surface. Describe the set of non-linear differential operators $$D_m:H^0(X,S^m(\Omega^1_X))\to H^0(X,K_X^{N(m)})$$ whose zero locus is the set of closed symmetric differentials.}
\end{qu}
The authors sketched an approach to the problem along the following lines: let $Y\subset J^kS^m(\Omega^1_X)$ be the subvariety of the bundle of $k$-th jets of symmetric m-differentials defining closed symmetric differentials. This is a proper subvariety as long as $k\geq 2m-2$, and it is naturally invariant under the fibrewise action of the $X$-torsor $G_k$ of $k$-th jets of reparametrization of sections of $S^m(\Omega^1_X)$. Assume that there exists a non-zero function $F$ vanishing on $Y$, which is semi-invariant under the action of $G_k$. Then $F$ defines a differential operator of the expected form. The problem with this approach is that semi-invariant functions are typically constructed profusely using geometric invariant theory of reductive groups, while their existence in this setting is not granted: the group of jet automorphisms of a complex ball is not reductive in general, more precisely $G_k$ is an $X$-torsor under a non-reductive group as long as $k\geq 1$.

In this paper we give a complete answer to the above question in the case $m=3$ by using a different method. The first step is a brute-force construction of differential operators characterizing closed 3-differentials for a specific choice of local coordinates, this is done in Theorem \ref{thm:main1}. Unfortunately the dependence on the choice of local coordinates is not negligible, and the geometry behind the equations we get is rather mysterious. The intuition on how to understand these equations in a coordinate-free fashion comes from web theory (see \cite{4}). A standard normalization for a choice of three 1-forms that define a non-degenerate planar 3-web at the generic point is that they add to zero. The advantage of this normalization is that it is symmetric in the indices, and it determines the 1-forms uniquely up to multiplication by an invertible function. The second step of our proof is thus to rewrite our local equations in terms of local 1-forms $\omega_1,\omega_2,\omega_3$, satisfying  $\omega_1+\omega_2+\omega_3=0$ and such that our 3-differential splits locally as $\eta=\omega_1\omega_2\omega_3$. This normalization, in analogy with the case of planar 3-webs, removes the dependence of our equations on the choice of local coordinates, and leads to the existence of canonical differential operators, acting on symmetric 3-differentials, whose simultaneous vanishing characterizes closed non-degenerate symmetric 3-differentials. More precisely, denoting by $H^0(X,S^3(\Omega^1_X))_{nd}$ the Zariski open subset of $H^0(X,S^3(\Omega^1_X))$ consisting of non-degenerate symmetric 3-differentials, and by $K_X(*)$ the sheaf of forms with poles along a divisor, we obtain a non-linear differential operator $D: H^0(X,S^3(\Omega^1_X))_{nd}\to H^0(X,K_X(*))$ whose zero locus is the set closed symmetric 3-differentials. This is the content of Theorem \ref{thm:main2}

\section{Notation and results}

Let $X$ be a smooth complex manifold of dimension $k$. A {\it symmetric $n$-differential $\eta$} on $X$ is a holomorphic section of the $n$-th symmetric power of the cotangent bundle of $X$. A symmetric $n$-differential $\eta\in H^0(X,S^n(\Omega^1_X))$ determines at each point $x\in X$ a homogeneous degree $n$ polynomial on the tangent space $T_xX$. If $X$ is a surface, then this polynomial factors into linear terms, and hence $\eta$ defines a holomorphic distribution of $n$ lines on $X$, or an $n$-web. A distribution of lines is integrable and can be given by a $1$-form, hence on a surface a symmetric $n$-differential is locally the product of holomorphic $1$-forms. This is not true in general if $k\geq 3$, because the cones that $\eta$ defines on the tangent spaces of $X$ will not in general be unions of hyperplanes, and even if they are the corresponding distributions may fail to be integrable.

A natural question to ask is which symmetric $n$-differentials on a surface can be locally described as products of closed holomorphic $1$-forms. We give a corresponding definition:



\begin{df} A symmetric $n$-differential $\eta$ on a complex surface $X$ is {\it closed} at $x\in X$ if in a neighborhood $U$ of $x$ there exist closed holomorphic $1$-forms $\zeta_1,\ldots,\zeta_n$ such that
\begin{equation}
\eta=\zeta_1\cdots\zeta_n.
\end{equation}

\end{df}

\begin{rem} In the terminology of \cite{2}, differentials having this property are said to have a {\it holomorphic closed decomposition} at $x\in X$, and a closed differential is one that has a holomorphic closed decomposition at a general point $x\in X$. We do not make this distinction because we limit ourselves to studying the local existence question.

\end{rem}




A symmetric $1$-differential $\eta$ is the same thing as a holomorphic $1$-form, and it is closed at $x\in X$ if it is closed in the usual sense, namely if $d\eta$ vanishes in a neighborhood of $x$. A symmetric $2$-differential can be viewed as a complexified Riemannian metric on $X$, and it is closed if the corresponding Riemannian curvature tensor vanishes (see \cite{2} 2.1). In this paper, we give a necessary and sufficient condition for a generic symmetric $3$-differential to be closed. We first restrict ourselves to differentials that define distinct tangent lines at each point:

\begin{df} A symmetric $n$-differential $\eta\in H^0(X,S^n(\Omega_X^1))$ on a complex surface $X$ is called {\it non-degenerate} at $x\in X$ if $\eta$ determines $n$ distinct lines on $T_xX$, and it is called {\it non-degenerate} if it is non-degenerate at every point of $X$.
\end{df}

A non-degenerate symmetric $3$-differential $\eta\in H^0(X,S^3(\Omega^1_X))$ admits near any point $x\in X$ a decomposition
\begin{equation}
\eta=\zeta_1\zeta_2\zeta_3,\quad \zeta_i\wedge\zeta_j\neq 0\mbox{ for }i\neq j,
\end{equation}
where the $\zeta_i$ are holomorphic $1$-forms. This decomposition is not unique, if $\eta=\zeta'_1\zeta'_2\zeta'_3$ is another such decomposition, then after a reordering $\zeta'_i=f_i\zeta_i$ for some non-zero holomorphic functions $f_i$ that satisfy $f_1f_2f_3=1$. We will state our main result in two different forms, depending on how we choose this decomposition.

One possibility is to choose appropriate integrating factors $f_1$ and $f_2$ to make the $1$-forms $\zeta'_1$ and $\zeta'_2$ closed. We can then integrate them to obtain a coordinate system near a point $x\in X$. This proves the following proposition:

\begin{prop} Let $\eta$ be a non-degenerate symmetric $3$-differential on $X$. Then near any point $x\in X$ there exist local coordinates $(z,w)$, and non-zero holomorphic functions $a(z,w)$ and $b(z,w)$, such that near $x$
\begin{equation}
\eta=[a(z,w)dz+b(z,w)dw]dz\,dw.
\label{eq:abcoords}
\end{equation}
\end{prop}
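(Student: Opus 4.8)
The plan is to start from the local decomposition $\eta=\zeta_1\zeta_2\zeta_3$ into pairwise independent holomorphic $1$-forms, which is available near $x$ by hypothesis, and to replace two of the three factors by differentials of coordinate functions. The one nontrivial input is the classical fact that on a complex surface every holomorphic $1$-form $\zeta$ that is nonzero at a point can be written near that point as $\zeta=g\,dh$, with $g,h$ holomorphic and $g$ nowhere vanishing. This holds because the Frobenius integrability condition $\zeta\wedge d\zeta=0$ is automatic on a surface, where $\zeta\wedge d\zeta$ is a $3$-form and hence vanishes identically; consequently the line field $\ker\zeta$ admits a local holomorphic first integral. Note that each $\zeta_i$ is indeed nonzero at $x$, since otherwise $\zeta_i\wedge\zeta_j$ would vanish at $x$ for $j\neq i$, contradicting non-degeneracy.

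Concretely, I would write $\zeta_1=g_1\,dz$ and $\zeta_2=g_2\,dw$ near $x$ as above, with $g_1,g_2$ nonvanishing, so that the integrating factors $f_1=g_1^{-1}$ and $f_2=g_2^{-1}$ make $f_1\zeta_1=dz$ and $f_2\zeta_2=dw$ exact. Because $g_1,g_2$ are nonvanishing and $\zeta_1\wedge\zeta_2\neq0$, we obtain $dz\wedge dw=(g_1g_2)^{-1}\,\zeta_1\wedge\zeta_2\neq0$ at $x$, so that $(z,w)$ is a local coordinate system. Setting $f_3=(f_1f_2)^{-1}=g_1g_2$ and $\zeta_3'=f_3\zeta_3$, the normalization $f_1f_2f_3=1$ gives
\begin{equation}
\eta=\zeta_1\zeta_2\zeta_3=(f_1\zeta_1)(f_2\zeta_2)(f_3\zeta_3)=dz\,dw\,\zeta_3'.
\end{equation}
Expanding the remaining factor in the new coordinates as $\zeta_3'=a(z,w)\,dz+b(z,w)\,dw$ for holomorphic $a,b$ then yields the asserted expression $\eta=[a\,dz+b\,dw]\,dz\,dw$.

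It remains to verify that $a$ and $b$ are nonvanishing, and this is exactly where pairwise independence is used a second time: since $\zeta_3'=f_3\zeta_3$ with $f_3$ nonvanishing, the relations $\zeta_3\wedge\zeta_1\neq0$ and $\zeta_3\wedge\zeta_2\neq0$ give $\zeta_3'\wedge dz\neq0$ and $\zeta_3'\wedge dw\neq0$. Computing $\zeta_3'\wedge dw=a\,dz\wedge dw$ and $\zeta_3'\wedge dz=-b\,dz\wedge dw$ forces $a\neq0$ and $b\neq0$, as required. I do not anticipate a genuine obstacle: the whole argument rests on the automatic integrability of line fields on a surface, and the remainder is bookkeeping with the integrating factors and the constraint $f_1f_2f_3=1$. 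The step demanding the most care is simply invoking the integrating-factor theorem in the holomorphic category and confirming that the factors are nonvanishing near $x$.
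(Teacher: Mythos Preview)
Your proof is correct and follows essentially the same approach as the paper: choose integrating factors $f_1,f_2$ so that $f_1\zeta_1$ and $f_2\zeta_2$ become closed (hence locally exact), use their primitives as coordinates $(z,w)$, and absorb the remaining factor into $\zeta_3'=a\,dz+b\,dw$. You have simply supplied more detail---the justification via Frobenius integrability on a surface and the verification that $a,b$ are nonvanishing---than the paper's one-sentence sketch.
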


We can now state our main result in terms of this representation.

\begin{thm} \label{thm:main1} Let $\eta$ be a non-degenerate symmetric $3$-differential on a complex surface $X$, which we represent in the form (\ref{eq:abcoords}) near a point $x\in X$. Let
\begin{equation}
A(z,w)=\pa_z\pa_w\log a,\quad B(z,w)=\pa_z\pa_w \log b,
\end{equation}
and suppose that $A-B$ does not vanish near $x$. Define the function
\begin{equation}
\xi(z,w)=\frac{1}{A-B}\left[a\left(\frac{b}{a}B\right)_z-b\left(\frac{a}{b}A\right)_w\right].
\label{eq:ABxi}
\end{equation}
Then $\eta$ is closed at $x\in X$ if and only if the following conditions are satisfied:
\begin{equation}
\left(\frac{\xi}{a}\right)_z=A,\quad \left(\frac{\xi}{b}\right)_w=B.
\label{eq:thm1}
\end{equation}
\end{thm}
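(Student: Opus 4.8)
The plan is to reduce the closedness of $\eta$ to a single first-order equation, and then to recognize the two conditions in the statement as exactly the solvability of that equation, with the explicit formula for $\xi$ forced by an elimination that relies on $A-B\neq 0$.

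First I would unwind the definition using the uniqueness of the factorization recalled just above the statement. Writing $\eta=\zeta_1\zeta_2\zeta_3$ with $\zeta_1=dz$, $\zeta_2=dw$, $\zeta_3=a\,dz+b\,dw$, any closed decomposition must be $f_1\zeta_1,f_2\zeta_2,f_3\zeta_3$ with $f_1f_2f_3=1$. Now $f_1\,dz$ is closed iff $f_1=f_1(z)$ and $f_2\,dw$ is closed iff $f_2=f_2(w)$, so $f_3=1/(f_1f_2)$, and setting $p=\partial_z\log f_3$, $q=\partial_w\log f_3$ the functions $p,q$ depend respectively on $z$ alone and on $w$ alone. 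Expanding the closedness $\partial_w(f_3a)=\partial_z(f_3b)$ of the third form and dividing by $f_3$ yields the single equation
\[ q\,a-p\,b=b_z-a_w, \]
which I will call $(\star)$. Conversely, any $p(z),q(w)$ solving $(\star)$ integrate back to the required closed factorization (non-degeneracy forces $a,b\neq 0$). Hence the theorem reduces to deciding when $(\star)$ has a solution with $p=p(z)$ and $q=q(w)$.

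The key observation is that the two displayed conditions are precisely this solvability, repackaged through $\xi$. Indeed $(\xi/a)_z=A$ is equivalent to $\partial_z\left(\frac{\xi-a_w}{a}\right)=0$, i.e. to $\frac{\xi-a_w}{a}$ being a function of $w$ alone, and likewise $(\xi/b)_w=B$ says $\frac{\xi-b_z}{b}$ is a function of $z$ alone. Given such a $\xi$ I set $q=(\xi-a_w)/a$ and $p=(\xi-b_z)/b$; these depend on the correct single variables, and subtracting gives $qa-pb=(\xi-a_w)-(\xi-b_z)=b_z-a_w$, which is exactly $(\star)$. This is already the converse direction. For the forward direction, if $\eta$ is closed I take $p,q$ from $(\star)$ and set $\xi:=a_w+qa=b_z+pb$ (the two expressions agree by $(\star)$); then $q_z=0$ and $p_w=0$ turn $(\xi/a)_z$ and $(\xi/b)_w$ into $A$ and $B$, so both conditions hold.

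The one remaining point is that the $\xi$ produced in the forward direction must coincide with the explicit formula in the statement, and this is where $A-B\neq 0$ enters. I would differentiate $(\xi/a)_z=A$ in $w$ and $(\xi/b)_w=B$ in $z$, take the combination eliminating $\xi_{zw}$, and substitute $\xi_z,\xi_w$ back from the undifferentiated conditions; all first-derivative terms of $\xi$ then cancel, recognizing $a\,a_{zw}-a_za_w=a^2A$ and its analogue for $b$, leaving a purely algebraic relation $ab(B-A)\,\xi=(\text{explicit expression in }a,b,A,B)$. Since $a,b\neq 0$ and $A-B\neq 0$, this determines $\xi$ uniquely, and a direct simplification identifies the right-hand side with the stated bracket $\frac{1}{A-B}\left[a\left(\frac{b}{a}B\right)_z-b\left(\frac{a}{b}A\right)_w\right]$.

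The main obstacle I anticipate is entirely the bookkeeping of this last step: performing the elimination and product-rule expansions cleanly and verifying that the algebraic expression obtained really matches the prescribed formula for $\xi$. Conceptually the argument is first order throughout; the only genuine inputs beyond calculus are the uniqueness of the line decomposition, which produces $(\star)$, and the single inequality $A-B\neq 0$, which makes $\xi$ well-defined and forces it to be the explicit one.
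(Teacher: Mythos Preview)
Your argument is correct and essentially mirrors the paper's proof: both reduce closedness to the existence of $p(z),q(w)$ (the paper's $-d(z),-c(w)$) with $qa-pb=b_z-a_w$, and the converse direction is identical up to notation. The only stylistic difference is in the forward direction: the paper plugs the explicit form $a=ZWH_z$, $b=ZWH_w$ directly into the defining formula for $\xi$ to obtain $\xi=ZWH_{zw}$ and then verifies the two conditions, whereas you first build $\xi:=a_w+qa=b_z+pb$, check the conditions, and then invoke your elimination (using $A-B\neq 0$) to force this $\xi$ to coincide with the stated formula; both routes are short computations and yield the same result.
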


\begin{proof} Suppose that $\eta=\zeta_1\zeta_2\zeta_3$ near $x\in X$ where the $\zeta_i$ are closed. These $1$-forms are closed and proportional to $dz$, $dw$ and $adz+bdw$,  so after reordering we can assume that
\begin{equation}
\zeta_1=Z(z)dz,\quad \zeta_2=W(w)dw,\quad \zeta_3=dH(z,w).
\end{equation}
for some nonzero holomorphic functions $Z(z)$, $W(w)$ and $H(z,w)$. Hence we see that  $\eta$ is closed if and only if $a$ and $b$ can be represented in the following form:
\begin{equation}
a(z,w)=Z(z)W(w)H_z(z,w),\quad b(z,w)=Z(z)W(w)H_w(z,w).
\end{equation}
Plugging this into the expression for $\xi$, we find that
\begin{equation}
\xi(z,w)=Z(z)W(w)H_{zw}(z,w).
\end{equation}
It follows that
\begin{equation}
\left(\frac{\xi}{a}\right)_z=\left(\frac{H_{zw}}{H_z}\right)_z=\pa_z\pa_w\log H_z=A,
\end{equation}
\begin{equation}
\left(\frac{\xi}{b}\right)_w=\left(\frac{H_{zw}}{H_w}\right)_w=\pa_w\pa_z\log H_w=B.
\end{equation}
Conversely, suppose that  $a$ and $b$ are nonzero functions such that $A$, $B$ and $\xi$ defined by (\ref{eq:ABxi}) satisfy equations (\ref{eq:thm1}). Comparing the equations, we see that
\begin{equation}
A=\left(\frac{a_w}{a}\right)_z=\left(\frac{\xi}{a}\right)_z,\quad B=\left(\frac{b_z}{b}\right)_w=
\left(\frac{\xi}{b}\right)_w.
\end{equation}
Hence, we can integrate and find functions $c(w)$ and $d(z)$ such that
\begin{equation}
a_w(z,w)=\xi(z,w)+a(z,w)c(w),\quad b_z(z,w)=\xi(z,w)+b(z,w)d(z).
\end{equation}
Define $Z(z)$ and $W(w)$ such that $Z'/Z=d$ and $W'/W=c$, and let
\begin{equation}
a(z,w)=Z(z)W(w)F(z,w),\quad b(z,w)=Z(z)W(w)G(z,w).
\end{equation}
Plugging this into the above equations, we get
\begin{equation}
ZWF_w=\xi=ZWG_z,
\end{equation}
hence we can integrate again and find $H(z,w)$ such that $F(z,w)=H_z(z,w)$ and $G(z,w)=H_w(z,w)$, as required.
\end{proof}

Theorem \ref{thm:main1} gives a complete answer to the question that we pose, but this answer is unsatisfactory on two counts. First of all, the nonlinear PDE (\ref{eq:thm1}) describing closed symmetric 3-differentials is given with respect to a specific choice of coordinate chart (\ref{eq:abcoords}). It is possible to represent this PDE in terms of the coefficients of $\eta$ with respect to an arbitrary choice of local coordinates, but that would involve using the Cardano formula to pass to local coordinates of the form (\ref{eq:abcoords}), and the resulting equations will be very unwieldy.  More importantly, there is no evident geometric meaning to any of the quantities defined in Theorem \ref{thm:main1}. In order to give a coordinate-free version of Theorem \ref{thm:main1}, we instead use some constructions coming from web theory (see \cite{4}).

Let $\eta\in H^0(X,S^3(\Omega^1_X))$ be a non-degenerate symmetric $3$-differential on a complex surface $X$. For any point $x\in X$, there is an open neighborhood $U$ of $x$ and holomorphic $1$-forms $\omega_1,\omega_2,\omega_3$ such that $\eta$ admits the following decomposition on $U$:
\begin{equation}
\eta=\om_1\om_2\om_3,\quad \om_1+\om_2+\om_3=0.
\label{eq:omega}
\end{equation}
The $1$-forms $\om_i$ are well-defined up to permutation and global multiplication by a cube root of unity. The non-degeneracy condition on $\eta$ implies that the $2$-form
\begin{equation}
\Om=\om_1\wedge\om_2=\om_2\wedge\om_3=\om_3\wedge\om_1,
\label{eq:Omega}
\end{equation}
which is well-defined by $\eta$ up to a sixth root of unity, does not vanish on $U$. Therefore we can define holomorphic functions $h_i$ on $U$ as follows:
\begin{equation}
d\om_i=h_i\Om, \quad h_1+h_2+h_3=0.
\end{equation}
We now define the $1$-form $\ga$ as
\begin{equation}
\ga=h_2\om_1-h_1\om_2=h_3\om_2-h_2\om_3=h_1\om_3-h_3\om_1,\quad d\om_i=\ga\wedge\om_i.
\label{eq:ga}
\end{equation}
It is easy to check that $\ga$ is independent of the choice of the cube root of unity and of the ordering of the $\omega_i$.

\begin{rem} A holomorphic $1$-form on a complex surface $X$ determines a dimension one foliation, so a symmetric $3$-differential determines a triple of foliations, or a {\it $3$-web} on $X$. Conversely, a $3$-web determines three one-forms $\om_1,\om_2,\om_3$, which are unique up to multiplication by an invertible function if we impose the condition $\om_1+\om_2+\om_3=0$. The form $\ga$ is then independent of the choice of the function, and its exterior derivative $d\ga$ is called the {\it Blaschke curvature} of the web. Geometrically, the Blaschke curvature measures the non-closedness of a small hexagonal trajectory made by following the leaves of the foliation around a given point.

We also remark that if $\eta$ is closed, then its decomposition as a product of closed $1$-forms is not necessarily the same as the decomposition (\ref{eq:omega}).

\end{rem}


\begin{thm} \label{thm:main2} Let $\eta$ be a non-degenerate symmetric $3$-differential on a complex surface $X$, and let $\omega_i$, $\Omega$, $h_i$ and $\gamma$ be defined as above. Suppose that the Blaschke curvature $d\gamma$ of $\eta$ does not vanish at $x\in X$. Define the following $1$-forms:
\begin{equation}
\beta_{ijkl}=\frac{1}{d\gamma}\left[2\frac{d(h_i\omega_j)}{\Omega}d\omega_k+d\left(\frac{d(h_i\omega_j)}{\Omega}\right)\wedge \omega_k\right] \omega_l,
\end{equation}
\begin{equation}
\alpha=\displaystyle\sum_{i\neq j}h_i\omega_j,\quad \beta_s=-\beta_{2121}-\beta_{1212}+\beta_{2211}+\beta_{1122},
\end{equation}
\begin{equation}
\beta_a=2\beta_{1121}-2\beta_{1211}+\beta_{2121}-\beta_{1212}+\beta_{1122}-\beta_{2211}+2\beta_{2122}-2\beta_{2212}.
\end{equation}
Then $\eta$ is closed if and only if the $1$-forms $\beta_a-\alpha$ and $\beta_s-\gamma$ are closed:
\begin{equation}
d\beta_a-d\alpha=0,\quad d\beta_s-d\gamma=0.
\label{eq:thm3}
\end{equation}
\end{thm}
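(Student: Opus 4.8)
The logical structure I would use is to reduce Theorem \ref{thm:main2} to Theorem \ref{thm:main1}. Since $\gamma$, $\alpha$, $\beta_s$, $\beta_a$ and the function-valued quotients entering the $\beta_{ijkl}$ are defined in a coordinate-free way, I may compute them in the convenient chart (\ref{eq:abcoords}); it then suffices to prove that the conditions (\ref{eq:thm3}) are equivalent to the conditions (\ref{eq:thm1}). The first task is to match the two normalizations. Starting from $\eta=[a\,dz+b\,dw]\,dz\,dw$, the three tangent directions are cut out by $dz$, $dw$ and $a\,dz+b\,dw$; rescaling these by functions whose product is $1$ and imposing $\omega_1+\omega_2+\omega_3=0$ forces, up to the permitted cube root of unity,
\begin{equation}
\omega_1=-a^{2/3}b^{-1/3}\,dz,\quad \omega_2=-a^{-1/3}b^{2/3}\,dw,\quad \omega_3=(ab)^{-1/3}(a\,dz+b\,dw),
\end{equation}
so that $\Omega=(ab)^{1/3}\,dz\wedge dw$.

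Next I would compute $\gamma$. Writing $\gamma=P\,dz+Q\,dw$ and using $d\omega_i=\gamma\wedge\omega_i$ on the forms proportional to $dz$ and $dw$ gives $P=\partial_z\log(a^{-1/3}b^{2/3})$ and $Q=\partial_w\log(a^{2/3}b^{-1/3})$, whence
\begin{equation}
d\gamma=(\partial_z Q-\partial_w P)\,dz\wedge dw=(A-B)\,dz\wedge dw.
\end{equation}
This identity is the conceptual heart of the translation: it shows that the hypothesis that the Blaschke curvature $d\gamma$ does not vanish at $x$ is exactly the hypothesis $A-B\neq 0$ of Theorem \ref{thm:main1}, and it explains why the factor $1/(A-B)$ in the definition (\ref{eq:ABxi}) of $\xi$ is the coordinate shadow of the operator $1/d\gamma$ appearing in every $\beta_{ijkl}$. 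I would also record the simplification $\alpha=\sum_{i\neq j}h_i\omega_j=\left(\sum_i h_i\right)\left(\sum_j\omega_j\right)-\sum_i h_i\omega_i=-\sum_i h_i\omega_i$, which streamlines the comparison with $\beta_a$.

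The remaining and most laborious step is to expand the functions $d(h_i\omega_j)/\Omega$ and the $1$-forms $\beta_{ijkl}$ in the chart (\ref{eq:abcoords}). Because each $\omega_l$ is proportional to $dz$, $dw$, or $a\,dz+b\,dw$, every $\beta_{ijkl}$ is an explicit function---carrying the prefactor $1/(A-B)$ and powers of $(ab)^{1/3}$---times one of these three forms. The content of the theorem is that the specific linear combinations defining $\beta_s$ and $\beta_a$ reassemble these pieces into the function $\xi$ of (\ref{eq:ABxi}), the subscripts $s$ and $a$ reflecting the symmetric and antisymmetric parts under the involution $z\leftrightarrow w$, $a\leftrightarrow b$, $A\leftrightarrow B$ that also interchanges the two equations in (\ref{eq:thm1}). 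Concretely, I expect the curls $d(\beta_s-\gamma)$ and $d(\beta_a-\alpha)$ to be equal, up to nonvanishing factors, to two independent combinations of $(\xi/a)_z-A$ and $(\xi/b)_w-B$, so that their simultaneous vanishing is equivalent to (\ref{eq:thm1}); invoking Theorem \ref{thm:main1} then finishes the proof. The main obstacle is precisely this verification: one must compute all of the $\beta_{ijkl}$ occurring in $\beta_s$ and $\beta_a$, track the $(ab)^{1/3}$ factors and signs through the quotients by $\Omega$ and $d\gamma$, and confirm that these particular combinations---rather than any others---collapse to $\xi$. A useful independent consistency check is that the resulting expressions are invariant under the residual permutation and cube-root ambiguity in the decomposition (\ref{eq:omega}).
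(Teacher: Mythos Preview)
Your plan is correct and follows the same reduction as the paper: compute the web quantities in the chart (\ref{eq:abcoords}), identify $d\gamma=(A-B)\,dz\wedge dw$, and show that (\ref{eq:thm3}) is equivalent to (\ref{eq:thm1}). Two practical differences are worth noting. First, instead of writing the $\omega_i$ with cube roots of $a$ and $b$, the paper parameterizes the other way around, setting $\omega_1=f\,dz$, $\omega_2=g\,dw$ and reading off $a=-f^2g$, $b=-fg^2$; this keeps every intermediate expression polynomial in $f,g$ and their derivatives and makes the bookkeeping much lighter. Second, rather than expanding all of the $\beta_{ijkl}$ and checking that the prescribed combinations collapse, the paper first finds the single function $\Xi$ with $\xi=f^2g^2\Xi$, rewrites (\ref{eq:thm1}) as $d(\Xi\omega_2)=d(2h_1\omega_2+h_2\omega_1)$ and $-d(\Xi\omega_1)=d(h_1\omega_2+2h_2\omega_1)$, and then \emph{symmetrizes} these two equations under the cyclic permutation of $\omega_1,\omega_2,\omega_3$ (legitimate because closedness is independent of which pair is chosen); the specific coefficients in $\beta_s$ and $\beta_a$ drop out of this symmetrization rather than having to be verified term by term.
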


\begin{rem} The $1$-form $\beta_s$, like $\gamma$, does not depend on any of the choices made when defining the $\omega_i$, while the $1$-forms $\beta_a$ and $\alpha$ change sign if we transpose any two of the $\omega_i$. In other words, the first and second equations in (\ref{eq:thm3}) are well-defined equations between sections of $(\Omega_X^2)^{\otimes 2}$ and $\Omega_X^2$, respectively.

\end{rem}

\begin{proof} The proof is essentially a long calculation that verifies that condition (\ref{eq:thm3}) is equivalent to condition (\ref{eq:thm1}). Let $dz$ and $dw$ be closed $1$-forms that are multiples of $\omega_1$ and $\omega_2$, respectively, then $(z,w)$ is a coordinate system near $x\in X$ in terms of which the $\omega_i$ have the form
\begin{equation}
\omega_1=f(z,w)dz,\quad \omega_2=g(z,w)dw,\quad \omega_3=-f(z,w)dz-g(z,w)dw,
\end{equation}
where $f$ and $g$ are nonzero holomorphic functions. Let 
\begin{equation}
\quad F=\pa_z\pa_w \log f,\quad G=\pa_z\pa_w\log g,
\end{equation}
then $\eta$ and the objects defined in (\ref{eq:Omega})-(\ref{eq:ga}) can be expressed in terms of $f$ and $g$ in the following way:
\begin{equation}
\eta=-f^2gdz^2dw-fg^2dzdw^2,\quad h_1=-\frac{f_w}{fg},\quad h_2=\frac{g_z}{fg},\quad h_3=\frac{f_w-g_z}{fg},
\end{equation}
\begin{equation}
\quad\Omega=fg\,dz\wedge dw,\quad \gamma=\frac{g_z}{g}dw+\frac{f_w}{f}dw,\quad
d\gamma=(F-G)dz\wedge dw.
\end{equation}
Comparing with $a$ and $b$, we see that
\begin{equation}
a=-f^2g,\quad b=-fg^2,\quad A=2F+G,\quad B=F+2G.
\end{equation}
\begin{rem} The observation that the quantity $A-B=F-G$ occurring in the denominator of $\xi$ in (\ref{eq:ABxi}) is proportional to the Blaschke curvature of the web defined by $\eta$ was what originally led us to rephrase the problem in terms of web theory.
\end{rem}
In order to phrase equations (\ref{eq:thm1}) in terms of the web data, we need to find an expression for $\xi$. After some guesswork, we see that
\begin{equation}
\xi=f^2g^2\Xi,
\end{equation}
where the function $\Xi$ is defined as
\begin{equation}
\Xi=
\frac{1}{d\gamma}\left[2\frac{d(h_1\omega_2+2h_2\omega_1)}{\Omega}d \omega_2+
2\frac{d(2h_1\omega_2+h_2\omega_1)}{\Omega}d\omega_1+\right.
\end{equation}
$$
\left.+d\left(\frac{d(h_1\omega_2+2h_2\omega_1)}{\Omega}\right)\wedge \omega_2+d\left(\frac{d(2h_1\omega_2+h_2\omega_1)}{\Omega}\right)\wedge \omega_1\right].
$$
Moreover, equations (\ref{eq:thm1}) can be rewritten as
\begin{equation}
d(\Xi \omega_2)=d(2h_1\omega_2+h_2\omega_1),\quad -d(\Xi \omega_1)=d(h_1\omega_2+2h_2\omega_1).
\end{equation}
We now observe that if we had instead picked $\omega_2$ and $\omega_3$ to be proportional to $dx$ and $dy$, we would have obtained an equivalent system of equations, because the condition that $\eta$ is closed does not depend on a choice of coordinate system. Therefore, we can symmetrize the above equation with respect to a cyclic permutation of the $\omega_i$, and a calculation shows that the resulting equations are precisely (\ref{eq:thm3}), which completes the proof.

\end{proof}

\begin{rem} Theorem \ref{thm:main2} describes a necessary and sufficient condition for a symmetric $3$-differentials $\eta$ to be closed as the vanishing of a certain tensor determined by $\eta$. This result lines up with Theorem \ref{thm:BO}, which describes closed symmetric $1$- and $2$-differentials by the vanishing of their exterior derivative and curvature tensor, respectively. However, the latter two objects can be thought of more specifically as the curvatures of a connection on an appropriate vector bundle. We do not know of a similar explicit geometric interpretation of the tensor defined in Theorem \ref{thm:main2}.
\end{rem}

\noindent \textbf{Acknowledgments.} The authors are thankful to Fedor Bogomolov for suggesting the problem, and for a number of interesting conversations on his theory of closed symmetric differentials.


\begin{thebibliography}\\
\bibitem{1} F. Bogomolov, B. De Oliveira, \textit{Closed symmetric 2-differentials of the 1st kind}, Pure and Applied Mathematics Quarterly, vol 9 (4) (2013) 613-643 \url{http://arxiv.org/abs/1310.0061}\\
\bibitem{2} F. Bogomolov, B. De Oliveira, \textit{Local structure of closed symmetric 2-differentials}, \url{http://arxiv.org/pdf/1410.1014.pdf}\\
\bibitem{3} F. Bogomolov, B. De Oliveira, \textit{Symmetric differentials of rank 1 and holomorphic maps}, Pure and Applied Mathematics Quarterly vol 7 (4) (2011), 1085-1104\\
\bibitem{4} A. Henaut, \textit{Analytic web geometry}, Web Theory and Related Topics (Toulouse, 1996)
(J. Grifone and E. Salem, eds.), World Sci. Publishing Co., River Edge, NJ (2001), 6-47\\
\bibitem{5} Y. Brunebarbe, B. Klingler, B. Totaro, \textit{Symmetric differentials and the fundamental group}, Duke Math. J. 162 (2013), no. 14, 2797-2813


\end{thebibliography}
\end{document}